\theoremstyle{plain}
\newtheorem{corollary}{Corollary}
\newtheorem{theorem}{Theorem}
\theoremstyle{definition}
\newtheorem{example}{Example}
\newtheorem{definition}{Definition}
\renewcommand{\le}{\leqslant}
\renewcommand{\ge}{\geqslant}
\newcommand{\Le}{\le}
\newcommand{\Ge}{\ge}
\DeclareMathOperator {\ord}{ord}
\newcommand{\N}{\mathbb{N}}
\newcommand{\Z}{\mathbb{Z}}
\def\deg{\operatorname{deg}}
\def\ord{\operatorname{ord}}
\def\C{\mathbb C}
\newcommand{\K}{\mathbb{K}}
\def\F{\mathcal {F}}
\def\G{\mathcal {G}}
\title[]{
Some Properties of Coefficients
Kolchin Dimension Polynomial.
}
\author{M.V.Kondratieva}
\thanks{
This research has been supported by the Interdisciplinary Scientific and Educational School of Moscow University «Brain, Cognitive Systems, Artificial Intelligence»
}
\address{
Moscow State University,  
Department of Mechanics and Mathematics, \\Leninskie Gory, Moscow, 
Russia,  119992.}
\email{marina.kondratieva@math.msu.ru}
\begin{document}

\begin{abstract}
The article presents a formula expressing
Macaulay 
constants of a numerical polynomial
through its  minimizing coefficients.
From this, we have that
Macaulay constants of Kolchin dimension polynomials
do not decrease.

For the minimal differential
dimension polynomial $\omega_{\G/\F}$ 
(this concept was introduced by W.Sitt in
\cite{Sit}) we will prove a criterion for Macaulay constants 
to be equal.
In this case, as the example (\ref{ex2}) shows, there are no 
bounds
from above to the Macaulay constants of the polynomial 
$\omega_{\xi/\F}$ for $\G=\F\left<\xi\right>$.

{\bf Keywords:}
{differential algebra, differential polynomials,
Kolchin dimcnsion polynomial,
minimizing coefficients.}
\end{abstract}

\maketitle
\thispagestyle{empty}

\section{Introduction}
One of the basic objects of stude in differential algebra is the
differential dimension polynomial introduced by E.Kolchin~\cite{Kolchin}. 
This is an analogue of dimension in algebraic  geometry,
and its role is similar to that of the Hilbert polynomial
in commutative algebra.

The dimension polynomial itself is not an 
invariant of the differential field,
but contains some invariants, such as the degree and leading coefficient.
In this article, we are considering
another little studied invariant,
introduced by W.Sit~\cite{Sit}, namely a minimal 
dimension polynomial.

Note that since the 80s of the last century, interest in computer algebra has increased.
Speaking of history,
one of the first scientists whose work laid to the foundation of
constructive theory of ideals in the ring of polynomials,
is F. Macaulay (20s of the 20th century).
He proved, in particular, 
the criterion for a numerical  function to be equal to
Hilbert function of a homogeneous ideal in the ring of commutative 
polynomials.
In 1990 T.Dub\'e used in the proof of the bound of the 
degree of elements
{G}r{\"o}bner
 basis numbers named in honor of Macaulay
as Macaulay constants.

The Hilbert function for sufficiently large values becomes a polynomial,
which is discussed in this article.
Moreover, the set of all Hilbert polynomials
 of homogeneous ideals coincides with the set of Kolchin 
dimension polynomials.
In \cite{KLMP} (proposition 2.4.10) a criterion for the equality of a
numerical polynomial to some Hilbert polynomial is proved, 
which does not use Macaulay's results.
In this paper we present this result in terms of Macaulay constants.

\section{Preliminary facts.}

One can find 
basic concepts and facts  in~\cite{Kolchin, KLMP}.

Denote   the set  of integers by $\Z$, 
non-negative integers by $\N_0$.
For $(i_1,\dots,i_m)\in\N_0^m$, the order of  $e$ is defined by 
$\ord e=\sum_{k=0}^mj_k$. 
We also denote by
$\binom{n}{k}$ binomial coefficients, $\frac{n!}{k!(n-k)!}.$
Note that any numerical (i.e., taking integer values at integer points) 
polynomial $v(s)$ can be written as $v(s)=\sum_{i=0}^da_i\binom{s+i}{i}$,
where $a_i\in\Z$.  We will call the numbers $(a_d,\dots,a_0)$ 
{\bf standard coefficients } of the polynomial $v(s)$.
The following definition first appeared in \cite{Kon}).

\begin{definition}\label{min}(see \cite{KLMP}, definition 2.4.9).
Let $\omega=\omega(s)$  
be a numerical polynomial of degree
$d$ in variable 
$s$. The sequence of  {\bf minimizing coefficients}  
of polynomial
$\omega$  is the vector
$b(\omega)=(b_d,\dots ,b_0)\in\Z^{d+1}$, 
defined
inductively on $d$ as follows. 
If $d=0$ (i.e., $\omega$ is a constant), then we set
$b(\omega)=(\omega)$. Let $d>0$ and $(a_d,\dots,a_0)$ are standard coefficients of polynomial $\omega$;
i.e.,
  $\omega(t)=\sum^d_{i=0}a_i\binom{t+i}{i}$.
 Denote by  $v(s) = \omega(s+a_d) - \binom{s+d+1+a_d}{d+1} + \binom{s+d+1}{d+1}$. Since
$\deg v<d$, one can calculate the sequence of minimizing coefficients
$b(v)=(b_k,\dots ,b_0)$ $(0\leq k<d)$ of polynomial $v(s)$. 
Now,   let     $b(\omega)=(a_d,0,\dots,0,b_k,\dots ,b_0)\in\Z^{d+1}$.
\end{definition}

Now  we define the Kolchin dimension polynomial
of a subset $E\subset \N_0^m$.
Regard the following partial order on $\N_0^m$: the relation
$(i_1,\dots, i_m)\le(j_1,\dots, j_m)$ is equivalent to
$i_k\le j_k$ for all $k=1,\dots, m$.
We  consider a function $\omega_E(s)$, that   in a point $s$ 
equals Card $V_E(s)$, where $V_E(s)$ is the set
of points $x\in\N_0^m$ such that $\ord x\le s$ 
and for every $e\in E$ the condition $e\le x$ isn't true.
Then (see for example, \cite{Kolchin}, p.115, or \cite{KLMP}, 
theorem 5.4.1 ) function $\omega_E(s)$ for all sufficiently large 
$s$ is a numerical polynomial. We call this polynomial
the {\bf Kolchin dimension polynomial} of a subset $E$and denote
$\omega_E(s)$.
\begin{definition}\label{3.2.1} An operator $\partial$ 
on a
commutative ring
$\K$ with unit is called
{\bf  a derivation 
} 
if it is linear
$\partial(a+b) =\partial(a)+\partial(b)$ and the Leibniz's rule  
$\partial(ab) =\partial
(a)b+a\partial(b)$ holds for all elements $a, b\in \K$.

{\bf A differential ring} (or $\Delta$-ring) is a 
ring $\K$ endowed with a set of derivations
$\Delta=\{\partial_1,\ldots,\partial_m\}$
which commute pairwise. 

Let 
$$\Theta =\Theta(\Delta) = \left
\{\partial_1^{i_1}\cdot\ldots\cdot\partial_m^{i_m}\:\big|\: i_j\Ge 0,\ 1\Le j\Le m\right\}$$
and
$\theta= \partial_1^{i_1}\cdot\ldots\cdot\partial_m^{i_m}$.
We  define {\bf order of derivative operator $\theta$}:
$$\ord(\theta) = i_1+\ldots+i_m\
\text {and}\
\Theta(s) =\{\theta\in\Theta| \ \ord(\theta) \le s\}.
$$ Let
$$R=\K\{y_j\:|\: 1\Le j\Le n\} := \K[\theta y_j\:|\: \theta\in\Theta, 1\Le j \Le n]\
$$
be a ring of commutative polynomials with coefficients in $\K$ in the 
infinite set of 
variables 
$\Theta Y=\Theta(y_j)_{j=1}^n$, and
$$R_s = \K\big[\Theta(s) y_j\big],\quad s\Ge 0.$$  
A ring $R$ is called a {\bf ring of differential polynomials
} in  differential indeterminate $y_1,\ldots, y_n$ 
over  $\K$.
\end{definition}

$R$  is differential ring with the set 
$\Delta = \{\partial_1,\ldots,\partial_m\}$.

\begin{definition}\label{3.2.38}
Let $\F$ be a differential field with a set of derivations
$\Delta = \{\partial_1,\ldots,\partial_m\}$.
The ring $D=\F[\partial_1,\dots,\partial_m]$ of skew polynomials in 
indeterminates $\partial_1,\dots,\partial_m$ with coefficients 
in $\F$ and the commutation rules 
$\partial_i \partial_j=\partial_j \partial_i,\ \partial_i a=
a\partial_i+ \partial_i(a)$  for all
$a\in \F,\ \partial_i,\partial_j \in \Delta$ is called 
a {\bf (linear) differential ($\Delta$-) operator ring}.
\end{definition}
If derivation operators are trivial on $\F$, then $D$ is
isomorphic to the commutative polynomial ring.

Below we consider the case  when $\K$ is the differential field $\F$
and char $\F=0$ only.  An ideal $I$ in $\F\{y_1,\ldots,y_n\}$ is 
called {\bf
  differential}, if $\partial
f\in I$ for all $f\in I$ and $\partial\in\Delta$.
We will denote by
$\{I\}$  the minimal perfect differential ideal containing $I$.
By the theorem (\cite{Kolchin}, p.126, Theorem 1), 
every perfect differential ideal can be represennted as an
intersection of finite number minimal
prime differential ideals: $\{I\}=\cap P_i$ 
(called the components of $\{I\}$).

Let
$\G=\F\left<\phi_1,\ldots,\phi_n\right>$ is finitely generated
 $\Delta$-extension of the differential field $\F$.
Define {\bf (Kolchin) differential
 dimension polynomial}
$\omega_{\phi_1, \ldots, \phi_n}(s)$
if
the following condition holds:
 $$\omega_{\phi_1, \ldots, \phi_n}(s)=
trdeg \F(\Theta(s)\phi_1,\ldots,\Theta(s)\phi_n)/\F$$
(here trdeg is the  transcendence degree field extension). 
In other words, we add
to the field $\F$ all derivations elements $\phi_1, \ldots,
\phi_k$ of order $1, \ldots, s$ 
and look for the cardinality of the 
maximum algebraically independent set of elements over $\F$
received field. 
For sufficiently large $s$ this dependence
is polynomial, or more precisely (see \cite{Kolchin}, p. 115, Theorem 7),
it is equal to the sum
$\sum_{j=1}^n\omega_{E_j}(s)$ of Kolchin polynomials of some subsets
$\N_0^m$.
Note that the
differential
dimension polynomial is the {\bf Hilbert polynomial }
filtered module of differentials of the extension $\G$ over $\F$.

This polynomial contains some $\Delta$-invariants of the field $G$ over
$F$ (in particular, the degree and leading coefficient), but
the polynomial can change when another system of generators 
is chosen
$\G=\F\left<\psi_1,\ldots,\psi_l\right>$. For example, polynomial
$\omega_{\F\left<\psi_1,\dots,\psi_n\right>}(s+1)=\omega_{\F\left<\Theta(1)\psi_1,\dots,\Theta(1 )\psi_n\right>}(s)$.

One of the invariants is introduced by W.Sit in \cite{Sit}
minimal dimension polynomial.

\begin{definition}\label{min_pol}(see \cite{Sit}).
The polynomial $\omega_{\eta_1,\dots,\eta_n/\F}(s)$ is called
{\bf minimal dimension differential polynomial }
for $\Delta$-extension $\G=\F\left<\eta_1,\dots, \eta_n\right>$ if for
 any system of $\Delta$-generators    
$\G=\F\left<\psi_1,\dots,\psi_k\right>$ 
we have
$\omega_{\psi_1,\dots,\psi_k/\F}(s)\geq\o_{\eta_1,
\dots,\eta_n/\F}(s)$ for all sufficiently large $s$.
In this case the polynomial $\omega_{\eta_1,\dots,\eta_n/\F}(s)$ 
will be
denoted by $\omega_{\G/\F}(s)$.
\end{definition}

W.Sit (\cite{Sit}, proposition 5)  proves the existence
$\omega_{\G/\F}(s)$.

\section{Basic results.}

\subsection{Macaulay constants of an numerical polynomial.
}

The article \cite{Kon} proves a criterion for the equality of 
a numerical polynomial  to
some  Kolchin polynomial $\omega_E(s)$. Denote by
$W$ is the set of all possible polynomials
$\{\omega_E(s): E\subset \N_0^m,m=1,\dots\}.$

\begin{theorem}\label{w}(\cite{KLMP}, proposition 2.4.10)
A numerical polynom $\omega(s)$ belongs to $W$, iff its 
sequence of minimizing coefficients consists only of non-negative integers.
\end{theorem}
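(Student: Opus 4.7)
The plan is to argue by induction on $d=\deg\omega$. The base case $d=0$ is immediate: $\omega$ equals a constant $c$, $b(\omega)=(c)$, and $\omega_{\{c\}}$ on $\N_0^{1}$ realises $c$ as a Kolchin polynomial iff $c\ge 0$.

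Before running the induction, I would observe that the ``correction binomials'' appearing in Definition~\ref{min} are themselves a Kolchin polynomial: for $E_0=\{(0,\dots,0,a_d)\}\subset\N_0^{d+1}$ the hockey-stick identity gives
\[
 \omega_{E_0}(s)=\binom{s+d+1}{d+1}-\binom{s+d+1-a_d}{d+1},
\]
so that after the shift $s\mapsto s+a_d$ one has
\[
 v(s)=\omega(s+a_d)-\omega_{E_0}(s+a_d)=(\omega-\omega_{E_0})(s+a_d).
\]
Since $\omega$ and $\omega_{E_0}$ share the leading term $a_ds^d/d!$, $\deg v<d$ and the inductive hypothesis applies to $v$.

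For the direction $(\Leftarrow)$, assume $b(\omega)\ge 0$. Then $a_d\ge 0$ and by induction there is $E'\subset\N_0^{m'}$ with $\omega_{E'}=v$. I would then assemble $E_0$ together with a suitable embedding of $E'$ into a common ambient $\N_0^{M}$, placing $E'$ in a block of coordinates separated from the ``slab'' coordinate governing $E_0$, so that the count of $V_E(s)$ splits cleanly as
\[
 \omega_E(s)=\omega_{E_0}(s)+\omega_{E'}(s-a_d)=\omega_{E_0}(s)+v(s-a_d)=\omega(s).
\]
Conversely, for $(\Rightarrow)$, given $\omega=\omega_E$ one has $a_d\ge 0$ because the leading coefficient of any Kolchin polynomial is non-negative; one then extracts from $E$ a subset $E'\subset\N_0^{m'}$ realising $v$, and the inductive hypothesis yields $b(v)\ge 0$, whence $b(\omega)=(a_d,0,\dots,0,b(v))\ge 0$.

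The main obstacle is the concrete assembly (resp.\ extraction) in the two inductive moves: one must exhibit a decomposition of the complement $V_E$ as the disjoint union of the ``$a_d$ top-dimensional strata'' counted by $\omega_{E_0}$ and a residual counted by $\omega_{E'}(\cdot-a_d)$. This combinatorial disentanglement is the technical core of the argument and is what is carried out in \cite{Kon} and \cite{KLMP}.
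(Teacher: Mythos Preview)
The paper does not give its own proof of Theorem~\ref{w}; the statement is simply quoted, with attribution to \cite{KLMP}, Proposition~2.4.10 (and to \cite{Kon}), and then used as a known input for the rest of the article. So there is nothing in the present paper to compare your attempt against.

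That said, your sketch matches the strategy of the cited references: induction on $d$, using the identity $v(s)=\omega(s+a_d)-\omega_{E_0}(s+a_d)$ with $E_0=\{(0,\dots,0,a_d)\}\subset\N_0^{d+1}$ to peel off the top minimizing coefficient, and then an assembly/extraction of a set $E$ realising $\omega$ from a set $E'$ realising $v$ (and vice versa). You correctly identify the combinatorial assembly/extraction as the technical heart and correctly defer it to \cite{Kon} and \cite{KLMP}; your outline is not a self-contained proof, but as a roadmap it is accurate and points to the right sources.
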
 

Note (see \cite{Kon}) that the set of Kolchin polynomials is 
closed under   additions:
\begin{theorem}\label{sum}(\cite{KLMP}, proposition 2.4.13.)
Let $\omega_1(s), \omega_2(s)\in W$. Then the polynomial 
$\omega(s)=\omega_1(s)+\omega_2(s)$ also belongs to $W$.
\end{theorem}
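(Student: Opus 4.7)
Write $\omega_j = \omega_{E_j}$ with $E_j \subset \N_0^{m_j}$, and after zero-padding assume both live in a common $\N_0^m$ with $m = \max(m_1, m_2)$. The task is to exhibit a subset $E \subset \N_0^M$ satisfying $\omega_E = \omega_1 + \omega_2$ for a suitable $M$.

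The plan is a direct combinatorial construction using two auxiliary ``tag'' coordinates that carve $V_E(s)$ into independent pieces corresponding to $V_{E_1}$ and $V_{E_2}$. A naive single-tag attempt in $\N_0^{m+1}$ (placing $E_1$ at tag $0$ and $E_2$ at tag $1$) fails, because the componentwise order propagates $E_1$-constraints upward to tag $1$. The fix is to work in $\N_0^{m+2}$ with tags $(a,b)$ and take
\[
E = \{(e,1,0): e \in E_1\} \cup \{(e,0,1): e \in E_2\} \cup \{(\mathbf{0},2,0), (\mathbf{0},0,2), (\mathbf{0},1,1)\}.
\]
The blocker elements confine the tags to $(a,b) \in \{(0,0),(1,0),(0,1)\}$, and since the tags $(1,0)$ and $(0,1)$ are order-incomparable, the generators from $E_1$ constrain only the $(1,0)$-sheet and those from $E_2$ only the $(0,1)$-sheet. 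A direct layer-by-layer count then yields
\[
\omega_E(s) = \binom{s+m}{m} + \omega_{E_1}(s-1) + \omega_{E_2}(s-1).
\]

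This differs from the target $\omega_1(s) + \omega_2(s)$ by the ``base'' term $\binom{s+m}{m}$ from the $(0,0)$-sheet and by a shift of the argument. To remove these artifacts, the plan is to first establish a shift lemma (if $\omega \in W$ then $\omega(s+1) \in W$, by a similar auxiliary-coordinate construction) and then apply the main construction to shifted data, adding further generators to suppress the base contribution without reintroducing cross-sheet coupling. The main obstacle is precisely this bookkeeping: one must verify that any corrective elements inserted to kill the unwanted $\binom{s+m}{m}$ term do not propagate along the partial order into the $(1,0)$- and $(0,1)$-sheets; the two-tag architecture is what makes this controllable.

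As an alternative route, one may bypass the explicit construction and deduce the theorem from Theorem~\ref{w}, by proving inductively on the degree that non-negativity of $b(\omega_1)$ and $b(\omega_2)$ implies non-negativity of $b(\omega_1+\omega_2)$. The base case $d=0$ (addition of constants) is trivial, and the inductive step must track how the residual polynomial $v$ in Definition~\ref{min} behaves under the non-linear substitution $s \mapsto s + a_d$ applied to a sum; this interaction between the two summands is the chief technical difficulty of the alternative approach.
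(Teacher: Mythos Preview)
The paper does not give its own proof of this statement; it is quoted from \cite{KLMP} (originally \cite{Kon}) and used as a black box. So there is no in-paper argument to compare against, and the question is simply whether your sketch stands on its own. It does not, in either branch.

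\textbf{First branch (explicit $E$).} Two concrete problems. First, the ``zero-padding'' step is wrong: if $E_1\subset\N_0^{m_1}$ and you replace it by $E_1\times\{0\}^{m-m_1}\subset\N_0^m$, the Kolchin polynomial changes (e.g.\ $E_1=\{1\}\subset\N_0$ has $\omega_{E_1}=1$, but $\{(1,0)\}\subset\N_0^2$ has $\omega=s+1$). The correct embedding adds the extra unit vectors to $E$ so as to freeze the new coordinates at $0$. Second, and more seriously, your own construction yields
\[
\omega_E(s)=\binom{s+m}{m}+\omega_1(s-1)+\omega_2(s-1),
\]
not $\omega_1(s)+\omega_2(s)$, and the promised repair cannot work as described. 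To ``suppress the base contribution'' you must insert generators with tag $(0,0)$; but every such generator $(\mathbf g,0,0)$ lies below points in both the $(1,0)$- and the $(0,1)$-sheets, so it necessarily cuts into those sheets as well. The two-tag device separates the $(1,0)$-sheet from the $(0,1)$-sheet, but it gives no protection of either sheet from generators placed at $(0,0)$. Thus the very coupling you set out to avoid reappears at the correction step, and the argument stalls.

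\textbf{Second branch (via Theorem~\ref{w}).} This is the natural route given the tools the paper assembles, and it is almost certainly how \cite{KLMP} proceeds, but you have only named the difficulty, not overcome it. The leading minimizing coefficient of $\omega_1+\omega_2$ is $b_d^{(1)}+b_d^{(2)}\ge 0$; the issue is the residual $v$. Unwinding Definition~\ref{min} for the sum gives
\[
v_{\omega_1+\omega_2}(s)=v_{\omega_1}(s+b_d^{(2)})+v_{\omega_2}(s+b_d^{(1)})+\bigl[\text{explicit difference of binomials}\bigr],
\]
and to close the induction you need, simultaneously with the additivity statement, the shift statement ``$\omega\in W$ and $k\ge 0$ imply $\omega(s+k)\in W$'' together with the observation that the bracketed term is itself a sum of shifted binomials $\binom{s+c}{d-1}$ lying in $W$. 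None of this is hard, but none of it is done in your proposal; as written, both branches are sketches that stop exactly where the content begins.
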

As follows from these theorems, the differential 
  dimension polynomial 
$\omega_{\eta_1,\dots,\eta_n/\F}(s)$
has non-negative minimizing coefficients.

\begin{theorem}\label{main}
Let $\omega(s)$ is a numerical polynomial of degree  $d$, and
$b(\omega)=(b_d,\dots ,b_0)\in\Z^{d+1}$ is a
sequence of its minimizing coefficients.
Then

\begin{equation}\label{Mac}
\omega(s)=\binom{s+d+1}{d+1}-\sum_{i=0}^{d+1}\binom{s+i-1-c_i}{i}
\end{equation}
where $$c_i=\sum_{j=i-1}^{j=d}b_i,\ i=0,\dots,d+1,$$
(the value of $c_{0}$ can be set arbitrarily).
\end{theorem}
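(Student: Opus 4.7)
The plan is to prove formula (\ref{Mac}) by induction on $d = \deg\omega$. For $d=0$ the polynomial $\omega$ equals the constant $b_0$, so $c_1 = b_0$ and the right side evaluates to $(s+1) - 1 - (s - c_1) = c_1 = b_0$, as required.

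For the inductive step I would unpack Definition~\ref{min}. Let $(a_d,\dots,a_0)$ be the standard coefficients of $\omega$, and set
$$v(s) = \omega(s+a_d) - \binom{s+d+1+a_d}{d+1} + \binom{s+d+1}{d+1},$$
which has degree less than $d$. Solving for $\omega$ gives
$$\omega(s) = v(s-a_d) + \binom{s+d+1}{d+1} - \binom{s+d+1-a_d}{d+1}.$$
By construction $b(\omega) = (a_d, 0, \dots, 0, b_k, \dots, b_0)$, where $b(v) = (b_k, \dots, b_0)$ for some $k < d$. Substituting into the definition of the Macaulay constants $c_i = \sum_{j=i-1}^{d} b_j$ then yields $c_i^\omega = a_d + c_i^v$ for $0 \le i \le k+1$ and $c_i^\omega = a_d$ for $k+2 \le i \le d+1$.

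Applying the inductive formula to $v(s-a_d)$ and substituting back, the binomial contributions for $0 \le i \le k+1$ match the target expression via the shift $c_i^\omega = a_d + c_i^v$. What remains to verify reduces, with $u = s - a_d$, to the purely combinatorial identity
$$\binom{u+d+1}{d+1} - \binom{u+k+1}{k+1} = \sum_{i=k+2}^{d+1} \binom{u+i-1}{i},$$
which follows from a short induction on $d$ using Pascal's rule (equivalently, the hockey-stick identity). The main obstacle is the index bookkeeping: tracking how the zero block in $b(\omega)$ between positions $k+1$ and $d-1$ translates into the $i \ge k+2$ part of the Macaulay sum, while confirming that the inductive formula for $v$ accounts exactly for the $i \le k+1$ part. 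Once that matching is pinned down, the residual identity is a routine telescoping computation.
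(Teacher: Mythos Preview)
Your argument is correct and follows the same induction as the paper: define $v$, solve for $\omega$, shift back, and combine with Pascal's rule. The only difference is that the paper avoids your case split by writing $b(v)=(b_{d-1},\dots,b_0)\in\Z^{d}$, i.e.\ it pads the minimizing coefficients of $v$ with zeros up to length $d$ and invokes the induction hypothesis directly at that level; then a single application of $\binom{k+1}{l+1}-\binom{k}{l}=\binom{k}{l+1}$ finishes the job, and the relation $c_i=c'_i+b_d$ (together with $c_{d+1}=b_d$) absorbs all indices at once. Your explicit hockey-stick identity is precisely what justifies that padding, so the two write-ups are equivalent---the paper's is shorter, yours makes the hidden telescoping visible.
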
 

\begin{proof}
We will prove this theorem by induction on  $d$.
If $d=0$, $\omega(s)=b_0$, we  need to check the equality
$b_0=(s+1)-(\binom{s-1-c_0}{0}+\binom{s-c_1}{1})=(s+1)-(1+(s-c_1))$,
which follows from the definition $c_1=b_0$.

Let now $d>0$, $b(\omega)=(b_d,\dots ,b_0)\in\Z^{d+1}$.
According to the definition 
(\ref{min}), we denote
\begin{equation}\label{v}
v(s) = \omega(s+b_d) - \binom{s+d+1+b_d}{d+1} + \binom{s+d+1}{d+1}.
\end{equation}
Then $\deg v<d$,  $b(v)=(b_{d-1},\dots ,b_0)\in\Z^{d}$
and by induction we can assume that

\begin{equation}\label{v1}
v(s)=\binom{s+d}{d}-\sum_{i=0}^{d}\binom{s+i-1-c'_i}{i},
\end{equation}
where
$$c'_i=\sum_{j=i-1}^{j=d-1}b_i,\ i=0,\dots,d.$$
Let's make the substitution $s'=s+b_d$ in the expression (\ref{v}):
$$
v(s'-b_d) = \omega(s') - \binom{s'+1+d}{d+1} + \binom{s'-b_d+d+1}{d+1}.
$$
Then, taking into account the formula
(\ref{v1}) we will have:
\begin{align}  \notag
 \omega(s')=  \binom{s'+d+1}{d+1} - \binom{s'-b_d+d+1}{d+1}+ v(s'-b_d)= \\ \notag
\binom{s'+d+1}{d+1}  
- \binom{s'-b_d+d+1}{d+1}
+
\binom{s'-b_d+d}{d}-\sum_{i=0}^{d}\binom{s'-b_d+i-1-c'_i}{i}
=\\ \notag
\binom{s'+d+1}{d+1}
-
(\binom{s'-b_d+d+1}{d+1}-\binom{s'-b_d+d}{d})
-\sum_{i=0}^{d}\binom{s'-b_d+i-1-c'_i}{i}. \notag
\end{align}
From equality
$\binom{k+1}{l+1}-\binom{k}{l}=\binom{k}{l+1}$ we have
\begin{align} \notag
 \omega(s')=  \binom{s'+d+1}{d+1}-\binom{s'-b_d+d}{d+1}
-\sum_{i=0}^{d}\binom{s'-b_d+i-1-c'_i}{i}=\\   \notag
=  \binom{s'+d+1}{d+1}
-
\binom{s'+d-c_{d+1}}{d+1}
-\sum_{i=0}^{d}\binom{s'-b_d+i-1-c'_i}{i}=\\   \notag
\binom{s'+d+1}{d+1}
-\sum_{i=0}^{d+1}\binom{s'+i-1-c_i}{i},
\end{align}
т.к. $c'_i+b_d=c_i$ для  $i=0,\dots,d$, $c_{d+1}=b_d$.
\end{proof}

The values of $c_i$ in the expression (\ref{Mac}) are called 
{\bf Macaulay constants}
(see, for example, \cite{Dube}, formula (*), p. 768).
The arbitrary value $c_{0}$ for $w\in W$ can be considered 
equal to the smallest number,
for which the value of the Hilbert function is equal to the 
value of the polynomial.
For $E=(e_{ij})\subset\N_0^m,i=1,\dots,n$
this number equals to $\sum_{i=1}^m max_{j=1,\dots,n}e_{ij}$.

So, the theorem (\ref{main}) establishes a relationship between 
the values  of the minimizing coefficients
polynomial and its Macaulay constants.

From here and from the theorem (\ref{w}) we immediately have
\begin{corollary}
A numerical polynomial is a Kolchin dimension polynomial if and only 
if the sequence of its Macaulay constants is non-decreasing.
\end{corollary}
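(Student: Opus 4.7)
The plan is to derive the corollary by combining the criterion of Theorem \ref{w} with the explicit conversion formula in Theorem \ref{main}. Theorem \ref{w} reduces membership of $\omega$ in $W$ to the non-negativity of its minimizing coefficients $b_0, b_1, \dots, b_d$, so the task is simply to translate this arithmetic condition into a monotonicity statement about the Macaulay constants $c_1, \dots, c_{d+1}$ produced by Theorem \ref{main}.

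The translation is a one-line telescoping computation. From the formula $c_i = \sum_{j=i-1}^{d} b_j$ one reads off $c_i - c_{i+1} = b_{i-1}$ for $i = 1, \dots, d$, together with the boundary value $c_{d+1} = b_d$. Consequently, $b_j \ge 0$ for every $j = 0, \dots, d$ if and only if $c_{d+1} \ge 0$ and $c_i \ge c_{i+1}$ for $i = 1, \dots, d$; equivalently, the sequence $(c_{d+1}, c_d, \dots, c_1)$ of Macaulay constants is non-decreasing and begins with a non-negative integer. Combining this equivalence with Theorem \ref{w} yields both directions of the corollary at once.

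The only subtlety is that Theorem \ref{main} leaves $c_0$ undetermined, but $c_0$ does not enter any of the differences $c_i - c_{i+1}$ for $i \ge 1$, so it plays no role in the monotonicity condition and may simply be omitted from, or assigned its natural value (as discussed after Theorem \ref{main}) in, the sequence. I do not anticipate any substantive obstacle: the corollary is a direct reading of the combinatorial identity established in the proof of Theorem \ref{main}, packaged together with the characterization of Kolchin dimension polynomials in Theorem \ref{w}.
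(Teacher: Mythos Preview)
Your proposal is correct and follows exactly the route the paper indicates: the corollary is stated as an immediate consequence of Theorem~\ref{w} together with the relation $c_i=\sum_{j=i-1}^{d}b_j$ from Theorem~\ref{main}, and your telescoping computation $c_i-c_{i+1}=b_{i-1}$ is precisely the missing line the paper leaves to the reader. Your remark that one also needs $c_{d+1}=b_d\ge 0$ (and that $c_0$ plays no role) is a useful clarification that the paper's statement leaves implicit.
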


Later in this article, since we will not use the value of Macaulay's 
constant
$c_0$, we will consider only the values
$c_{d+1},\dots,c_1$ and, for convenience, number them, 
as well as the minimizing coefficients, starting from zero:
$(c_d,\dots,c_0)$, where $d$  is polynomial degree.
\begin{example}
Consider the following system of linear differential equations:
\begin{eqnarray*}
\partial_1^2\partial_2\xi=0 \\
\partial_1\partial_2^2\xi=0 \\
\partial_1\partial_2\partial_3^2\xi=0 \\
\dots\\
\partial_1\partial_2\partial_3\dots \partial_m^2\xi=0 
\end{eqnarray*}
Let's find the dimension polynomial of this system and its Macaulay constants.

Since the equations are linear and form a Gr\"obner basis in the ring 
of differential operators,
we need to calculate the dimension polynomial of the lower 
triangular matrix $E\subset \N_0^m$,
with 2 on the diagonal, and 1 below it:
$$
\begin{matrix}
2 0 0\dots 0\\
1 2 0\dots0 \\
\dots\\
1 1\dots 1 2
\end{matrix}
$$
To calculate a dimension polynomial
we will apply induction and the formula
polynomial changes when adding an element, (see \cite{KLMP}, theorem 2.2.10).

Let $m=2$, $e=(1, 0)$, 
\begin{equation}\label{formula}
\omega_E(s)=\omega_{E\cup e}(s)+\omega_H(s-1),
\end{equation}
where $H$ is a matrix 
subtracted from each row $E$ of vector $e$,
$\ord e=1$. We have: $\omega_E(s)=\omega_e(s)+
\omega_{
\begin{smallmatrix}
10\\01
\end{smallmatrix}
}
(s-1)=(s+1)+1$.      
In particular, the sequence of minimizing coefficients    
equals
$b(\omega_E)=(1,1)$.

Now suppose $m>2$.
Let $e=(1,0,\dots,0)$ and apply the same
formula:
$\omega_E(s)=\omega_e(s)+
\omega_{ H}(s-1)$, where $H$ i a matrix:
$$
\begin{matrix}
1 0 0\dots 0\\
0 2 0\dots0 \\
\dots\\
0 1\dots 1 2
\end{matrix}
$$      

It is clear that the dimension polynomial of the matrix $H$
is equal to the dimension polynomial of the matrix 
$E'\subset\N_0^{m-1}$,
for which, by the inductive hypothesis, holds
$b(\omega_{E'})=(1,\dots,1)\subset\N_0^{m-1}$.

By definition of minimizing coefficients (\ref{min}),
from the condition $\omega_e(s)=\binom{s+m-1}{m-1}$ we get
$b(\omega_{E})=(1,\dots,1)\subset\N_0^{m}$.
From the theorem (\ref{main}), the Macaulay constants are:
$(1,2,\dots,m)$.

Let us now calculate the standard coefficients of the dimension polynomial.
Let for $E\subset\N_0^m$ the standard coefficients be the numbers
$a(\omega_{E})=(a_{m-1},\dots,a_0)$ which means representation
$$\omega_E(s)=\sum_{i=0}^{m-1}a_i\binom{s+i}{i}.$$
Denote the dimension polynomial of the matrix $m\times m$ by
$\omega_m(s)$.
As shown above, for $m=2$ the dimension polynomial
equals $(s+1)+1$, so it's
sequence of standard coefficients
 is $(1,1)$.
Let $m>2$.
Denote by $\nabla$ the difference operator acting
on polynomials: $\nabla f(s)=f(s)-f(s-1)$.
Taking into account the formula
$\nabla\binom{s+i}{i}=\binom{s+i-1}{i-1}$ and theorem (\ref{main})
about the representation of a polynomial in terms of its Macaulay constants, we see that
$\nabla\omega_m(s)$ is a polynomial whose Macaulay constants
are equal to $c_{m-1},\dots,c_1$ and we can use the inductive assumption
to calculate standard coefficients $\omega_m(s)$, $a_i(\omega_m)=a_{i-1}(\omega_{m-1})$, $i=m-1,\dots,1$.
It remains for us to calculate the lowest standard coefficient, $a_0(\omega_m)$. 
We use the formula (\ref{formula})
$\omega_m(s)=\binom{s+m-1}{m-1} +\omega_{m-1}(s-1)$.
Substitute in this formula $s=-1$, from the equality
$\binom{s+i}{i}|_{s=-1}=0$ for $i>0$
we get $a_0(\omega_m)=\sum_{i=0}^{m-2}a_i(\omega_{m-1})\binom{-1+i}{i}$.

Compute
$\binom{s+i-1}{i}|_{i=-1}
=\binom{i-2}{i}=
\frac{(i-2)(i-3)\dots}{i!}$,
where there are exactly $i$ factors in the product from above. It's clear that
if $i>1$, this product is equal to zero. We have
$a_0(\omega_m)=
a_1(\omega_{m-1})(1-2)+a_0(\omega_{m-1})=-a_2(\omega_m)+a_1(\omega_m)$.

We have a recursive formula, and can calculate all the standard coefficients of the polynomial $\omega_m$:
for $m=3$ it is (1,1,0), for $m=4$ it is (1,1,0,-1), then
(1,1,0,-1,-1,0,1,\dots,).
\end{example}

\subsection{
Macaulay constants of a minimal dimension polynomial.}

According to Sit's theorem (\cite{Sit}, Proposition 5),
set $W$ is
well-ordered with respect to the above 
(definition \ref{min_pol}) order.
Note that this order is equivalent to the lexicographic order
on a sequence of minimizing coefficients (from highest to lowest)
and on a sequence of Macaulay constants.
If you need to compare sequences of different lengths,
the shorter sequence must be left-completed with zeros.

The simplest example of a differential field extension, for which
succeed
specify the minimal dimension polynomial  is an extension,
given by one equation.
\begin{theorem}\label{first}
Let $F\in \F\{y_1\}$.
Then for any prime differential component $P$
of ideal $\{F\}$
holds
\begin{equation}\label{min1}
\omega _P(s)=\binom{s+m}{m}-\binom{s+m-d_j}{m},
\end{equation}
and the  sequence of Macaulay constants of the minimal 
dimension polynomial 
for each component is constant
(may be different for different prime components).
Conversly, if the Macaulay constants of the differential
dimension polynomial of the field extension 
$\omega_{\xi_1,\dots,\xi_n/\F}$
are the same and $\deg (\omega_{\xi_1,\dots,\xi_n/\F})=|\Delta|-1$, 
then this polynomial is minimal $\omega_{\G/\F}$,
and a prime ideal, defining the field $\G$,
is the general component one differential polynomial 
$F\in \F\{y_1\}$.
\end{theorem}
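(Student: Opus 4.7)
The plan is to prove both directions by converting the statement about Macaulay constants into the closed form of formula (\ref{min1}), using Theorem \ref{main} combined with the hockey-stick identity
$$\sum_{i=0}^{k}\binom{s-h-1+i}{i}=\binom{s-h+k}{k},$$
and then bridging to the ideal-theoretic conclusions via Kolchin's and Sit's structural results.

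For the forward direction, I would first show that in $\F\{y_1\}$ each prime component $P$ of $\{F\}$ has a characteristic set determining a single-point leader set $E_P=\{e_j\}\subset\N_0^m$ with $|e_j|=d_j$, so that by Kolchin's parametric set description $\omega_P(s)=\binom{s+m}{m}-\binom{s+m-d_j}{m}$, establishing (\ref{min1}). I then apply Theorem \ref{main} with $d=m-1$, which requires $\sum_{i=0}^{m}\binom{s+i-1-c_i}{i}=\binom{s+m-d_j}{m}$; setting $c_i=d_j$ for every $i$ turns the left-hand side into the hockey-stick sum and produces the right-hand side, showing that all Macaulay constants equal $d_j$. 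To pass from $\omega_P$ to $\omega_{\G/\F}$, I would argue that both polynomials share degree $m-1$ and leading coefficient $d_j/(m-1)!$, so the top Macaulay constant of $\omega_{\G/\F}$ equals $d_j$; combined with the non-decreasing property from the corollary above, the only possibility is that every Macaulay constant of $\omega_{\G/\F}$ also equals $d_j$, forcing $\omega_{\G/\F}=\omega_P$.

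For the converse, I start from the hypothesis that all Macaulay constants of $\omega_{\xi_1,\dots,\xi_n/\F}$ equal some common value $d$ and its degree is $m-1$. Theorem \ref{main} combined with the hockey-stick identity in reverse gives
$$\omega_{\xi_1,\dots,\xi_n/\F}(s)=\binom{s+m}{m}-\binom{s+m-d}{m}.$$
By the forward direction any general component of a differential polynomial in $\F\{y_1\}$ with leader of order $d$ realizes the same polynomial; Sit's well-ordering then forces $\omega_{\xi_1,\dots,\xi_n/\F}$ to equal $\omega_{\G/\F}$, so the polynomial is minimal. To identify the defining ideal as the general component of a single polynomial $F\in\F\{y_1\}$, I invoke Kolchin's structure theory: degree $m-1<m$ forces the differential transcendence degree of $\G/\F$ to vanish, and the precise form of the minimal polynomial forces $\G$ to be singly generated over $\F$ by an element satisfying a single irreducible differential equation of order $d$, whose general component is precisely the defining ideal of $\G$.

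The main obstacle lies in the structural (ideal-theoretic) steps. In the forward direction, proving that every prime component of $\{F\}$ in $\F\{y_1\}$ has a single-point leader set, and then deducing $\omega_{\G/\F}=\omega_P$ rather than merely $\omega_{\G/\F}\le\omega_P$, both require input beyond combinatorics of Macaulay constants. In the converse, establishing that a field whose minimal polynomial is exactly $\binom{s+m}{m}-\binom{s+m-d}{m}$ must be singly generated by an element satisfying one irreducible equation requires delicate use of Kolchin's structure theorems for extensions of prescribed differential type and typical differential dimension.
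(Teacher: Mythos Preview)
Your forward direction tracks the paper's argument closely, but one step is misstated. From ``top Macaulay constant of $\omega_{\G/\F}$ equals $d_j$'' together with the non-decreasing property you only obtain that the remaining constants are $\ge d_j$; this alone does not force equality. You must also feed in the inequality $\omega_{\G/\F}\le\omega_P$ (which you yourself flag later as an obstacle) to bound them from above. The paper handles this cleanly via minimizing coefficients: the polynomial in $(\ref{min1})$ has minimizing sequence $(d_j,0,\dots,0)$, and since degree and leading coefficient are birational invariants (differential type and typical differential dimension), no Kolchin polynomial sharing them can lie strictly below it, whence $\omega_{\G/\F}=\omega_P$.

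In the converse there are two real gaps. First, ``Sit's well-ordering then forces $\omega_{\xi_1,\dots,\xi_n/\F}=\omega_{\G/\F}$'' is a non sequitur: well-ordering guarantees a minimum exists but says nothing about which polynomial attains it for \emph{this} extension. Minimality follows exactly as in the forward direction, from the minimizing sequence $(d,0,\dots,0)$ together with invariance of type and typical dimension. Second, and more substantively, your route to single generation through unspecified ``Kolchin structure theorems for extensions of prescribed differential type and typical differential dimension'' is not an available tool; there is no off-the-shelf result saying that a particular shape of $\omega_{\G/\F}$ forces $\G$ to be singly generated. The paper's argument is concrete and works with the \emph{given} generators: by Kolchin's theorem one has $\omega_{\xi_1,\dots,\xi_n/\F}=\sum_{j=1}^{n}\omega_{E_j}$ with each $\omega_{E_j}\in W$, hence each summand has non-negative minimizing coefficients. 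Because the total has minimizing sequence $(d,0,\dots,0)$, all summands but one must vanish identically; thus $\xi_2,\dots,\xi_n$ are algebraic over $\F\langle\xi_1\rangle$, giving $\G=\F\langle\xi_1\rangle$, and the surviving $E_1$ reduces to a single point, so the defining prime ideal in $\F\{y_1\}$ is the general component of one differential polynomial. This decomposition-plus-positivity step is the mechanism your plan is missing.
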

\begin{proof}
By the components theorem (\cite{Kolchin}, p.185, theorem 5), 
every minimal prime differential
ideal containing $\{F\}$, may be
represented as $P_j=[G_j]:H_{G_j}^\infty$, 
where $G_j\in\F\{y_1\}$ is an
irreducible differential polynomial.
Denote  $d_j=\ord G_j$ the order of this polynomial. We obtain
$$\omega_{\xi_1}(s)=
\binom{s+m}{m}-\binom{s+m-d_j}{m},$$
where $\xi_1$ is the common zero of $P_j$, therefore
$\omega_{P_j}(s)\leq \binom{s+m}{m}-\binom{s+m-d_j}{m}$.
Sequence of minimizing coefficients
polynomial (\ref{min1}) is
$(d,0,\dots,0)$,  so
inequality $\omega_{P_j}>=\binom{s+m}{m}-\binom{s+m-d_j}{m}$
will follow from the differential type and typical $\Delta-$ dimensions
invariance theorem
(see, for example, \cite{KLMP}, collarary 5.4.7). 

According to the theorem (\ref{main})
the Macaulay constants of the polynomial (\ref{first}) are equal to $(d_j,\dots,d_j)$.

Conversely, let all the Macaulay constants of the minimum dimension
polynomial $\omega_{\G/\F}$ be the same and equal to $d$, 
differential type   $\G$ over $\F$ is equal to $m-1$ 
(this condition means that the polynomial
$\omega_{\G/\F}$ has degree $m-1$, see \cite{Kolchin}, p. 118) and
$\xi_1,\dots,\xi_n$ -- common zero,
for which $\omega_{\xi_1,\dots,\xi_n/\F}(s) =\omega_{\G/\F}(s)
=\binom{s+m}{m}-\binom{s+m-d}{m}$.

By Theorem (\cite{Kolchin}, p. 115, Theorem 7)
$\omega_{\xi_1,\dots,\xi_n/\F}=\sum_{j=1}^n\omega_{E_j}$, and 
since all the minimizing coefficients of $\omega_{E_j}$ polynomials 
are non-negative, from the condition
$\omega_{\xi_1,\dots,\xi_n/\F}=\binom{s+m}{m}-\binom{s+m-d}{m}$
it follows that $\omega_{E_j}=0$ for all $j$, except for the total 
one (we can assume that for all
$j>1$). Because $\omega_{E_j}=0$ for $j>1$, the elements $\xi_2,\dots,\xi_n$ are algebraic over $\F\left<\xi_1\right>$ and $\F\left<\xi_1 ,\dots,\xi_n\right>=\F\left<\xi_1\right>$. 
Since
$E_1$ consists of one element, a prime ideal defining
$\F\{y_1\}$, is the general component of some differential 
polynomial.
\end{proof}

If degree
$\omega_{\psi_1,\ldots,\psi_n/\F}(s)$ is less than $m$, and the field $\F$ contains
$\C(x_1, \ldots, x_m)$ (field of rational functions in unknowns
$x_1, \ldots, x_m$), then (see \cite{Kolchin}, part II, \$8, assertion 9)
$\G$ has a
primitive element $\xi$ (i.e. such that
$\G=\F\left<\xi\right>$).
Moreover, $\xi$ can be chosen in the form
$\xi=\sum_{j=1}^n\lambda_j\psi_j$, $\lambda_j\in\F,
j=1,\ldots,n$. With such a changing, the differential dimension
polynomial does not increase, i.e. $\omega_{\xi/\F}(s)\le \omega_{\psi_1,
\ldots,\psi_n/\F}(s)$ for all sufficiently large $s$ and so
the problem of finding the minimal differential dimension
polynomial is related to the search for a primitive element
fields (although it doesn't solve it, see \cite{KLMP}, Example 5.7.7).
Both  the problem of calculating the polynomial 
$\omega_{\psi_1,\ldots,\psi_n/\F}(s)$,
and the searching  $\xi$ are "in principle" solved 
using the algorithm
Rosenfeld-Grebner, included in the diffag package of the system
Computer Algebra Maple.

Suppose a primitive element is found in the extension,
$b(\omega_{\xi/\F})=(b_d,b_{d-1},\dots.b_0)$ and $b_i\ne0$
for some $i<d$. We are interested in whether there are such values
$b_i$ that $b(\omega_\G/\F)(s))=(b_d,0,\dots,0)$
is impossible.

The following example gives a negative answer to this question.
\begin{example}\label{ex2}
Let $\F=\C(x_1,x_2)$, $k>=2$ and $\Delta$-extension $\G$ be given by the system
$$
  \begin{cases} \partial_1\partial_2^2\varphi=0, &
        \\x_1 x_2 \partial_1^k \partial_2\varphi-x_1\partial_1^k\varphi+x_2\partial_1^{k-1}\partial_2\varphi+\partial_1\partial_2\varphi-\partial_1^{k-1}\varphi=0.&\end{cases}
$$

These equations form the Gr\"obner basis, so
$\omega_{\varphi/\F}(t)=
\omega_{\left(\begin{smallmatrix} 1 & 2\\ k & 1\end{smallmatrix}\right)}(s)=2s+k$ and $b(\omega_{\varphi/\F}) =(2,k-1)$. Let
$\psi=\varphi-x_1 x_2(\partial_1^{k-1}\varphi-x_2\partial_1^{k-1}\partial_2\varphi)$. This change of variables is invertible, 
since $\varphi=(1+x_1x_2\partial_1^{k-1})\psi$.
At the same time $\partial_1\partial_2\psi=0$, hence
$\omega_{\psi/\F}(s)=\omega_{\G/\F}(s)=2s+1$ and $b(\omega_{\G/\F})=(2,0 )$.
\end{example}

In the example (\ref{ex2}), the field contains elements that are not constants. This condition, as the following statement shows,
is essential.
\begin{theorem}
Let $\G$ be defined by the system $\Sigma$ of linear differential 
equations in one variable  with coefficients,
which are constants of the field $\F$. If
$\omega_{\xi/\F}(s)!=\binom{s+m}{m}-\binom{s+m-d}{m}$, then
$\omega_{\G/\F}(s)!=\binom{s+m}{m}-\binom{s+m-d}{m}.$
\end{theorem}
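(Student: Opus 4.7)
My plan is to prove the contrapositive: assume $\omega_{\G/\F}(s)=\binom{s+m}{m}-\binom{s+m-d}{m}$ and derive $\omega_{\xi/\F}(s)=\binom{s+m}{m}-\binom{s+m-d}{m}$. First I will invoke Theorem~\ref{first}: under this hypothesis there exists an element $\eta\in\G$ with $\G=\F\langle\eta\rangle$ and $\omega_{\eta/\F}(s)=\binom{s+m}{m}-\binom{s+m-d}{m}$, where $\eta$ is a generic zero of an irreducible differential polynomial $F\in\F\{y\}$ of order $d$. Minimality of $\omega_{\G/\F}$ at once yields the lower bound $\omega_{\xi/\F}(s)\ge\binom{s+m}{m}-\binom{s+m-d}{m}$; I must establish the matching upper bound.

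Second I will translate the problem into commutative algebra. Let $C$ be the field of constants of $\F$ and $D_C=C[\partial_1,\dots,\partial_m]$, a commutative polynomial ring. Writing the elements of $\Sigma$ as $L_j y$ with $L_j\in D_C$, the identity $\theta(L_j y)=(\theta L_j)y$ (valid because the coefficients of $L_j$ are constants) shows that $[\Sigma]$ is generated in $\F\{y\}$ by $\F$-linear forms in $\Theta y$; the quotient $\F\{y\}/[\Sigma]$ is therefore a polynomial $\F$-algebra, $[\Sigma]=P_\xi$ is prime, and $\F\{\xi\}\cong\F[V]$ where $V$ is the $\F$-span of $\Theta\xi$ in $\G$. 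Letting $I_0\subseteq D_C$ denote the commutative ideal generated by the $L_j$, a short verification gives $V\cong\F\otimes_C(D_C/I_0)$ as filtered $\F$-vector spaces, so
$$\omega_{\xi/\F}(s)=\dim_C(D_C/I_0)_{\le s}$$
is the filtered Hilbert function of a graded commutative ideal in $m$ variables.

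The core of the proof then reduces to exhibiting a nonzero operator $L\in I_0$ of degree at most $d$. Given such an $L$, the inclusion $(L)\subseteq I_0$ yields the quotient map $D_C/(L)\twoheadrightarrow D_C/I_0$ and hence
$$\omega_{\xi/\F}(s)\le\dim_C(D_C/(L))_{\le s}=\binom{s+m}{m}-\binom{s+m-d}{m},$$
which combined with the lower bound completes the argument. To construct $L$ I plan to use the realization $\xi=A(\eta)/B(\eta)$ of $\xi$ as a differential rational function of $\eta$; substituting into $F(\eta)=0$ and clearing denominators produces a differential identity in the polynomial ring $\F[V]$, and the constant-coefficient structure of $\Sigma$ should force this identity to descend to a constant-coefficient linear operator in $I_0$ whose order is bounded by the order $d$ of $F$.

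The hardest step will be precisely this descent, turning the (in general nonlinear) identity coming from $F(\eta)=0$ into a genuine constant-coefficient linear operator of order at most $d$ annihilating $\xi$. The constant-coefficient hypothesis on $\Sigma$ must play an essential role here: without it, as Example~\ref{ex2} demonstrates (where the non-constant factor $x_1x_2$ enters the substitution $\varphi\mapsto\psi$), a descent to $D_C$ of the required small order does not exist, and the minimizing coefficients of $\omega_{\xi/\F}$ can be strictly larger than those of $\omega_{\G/\F}$.
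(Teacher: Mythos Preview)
Your reduction to commutative algebra is correct and useful: under the constant-coefficient hypothesis the defining ideal of $\xi$ really is $\F\otimes_C I_0$ for a graded ideal $I_0\subset D_C=C[\partial_1,\dots,\partial_m]$, and $\omega_{\xi/\F}$ is the filtered Hilbert function of $D_C/I_0$. The lower bound $\omega_{\xi/\F}\ge\omega_{\G/\F}$ is also fine. The problem is the step you yourself flag as ``hardest'': producing a nonzero $L\in I_0$ with $\deg L\le d$. Your proposed mechanism --- express one primitive element as a differential rational function of the other and substitute into $F(\eta)=0$ --- does not do this. Writing $\eta$ as a differential rational function of $\xi$ and plugging into $F$ yields a (generally nonlinear) differential relation for $\xi$ with coefficients in $\F$, not in $C$, and of order $\ord F+\ord(\eta\ \text{in}\ \xi)$, not $\le d$. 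There is no visible reason this relation should descend to a single element of $I_0$ of small degree; in fact, making that descent work is tantamount to proving the theorem. As Example~\ref{ex2} shows, the obstruction is delicate: once the change of primitive element involves non-constant coefficients, low-order generators genuinely disappear.

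The paper avoids this construction entirely. It works with the module of differentials $\Omega_{\G/\F}$ and compares two length-one free resolutions over the operator ring $D$: the tautological one $0\to J\to D\to\Omega_{\G/\F}\to 0$ coming from $\Sigma$, and a second one $0\to D\to D\to\Omega_{\G/\F}\to 0$ whose existence is guaranteed (Theorem~5.7.8 of \cite{KLMP}) precisely by the hypothesis $\omega_{\G/\F}(s)=\binom{s+m}{m}-\binom{s+m-d}{m}$. Schanuel's lemma then gives $J\oplus D\cong D\oplus D$, so $J$ is stably free. The constant-coefficient hypothesis now enters structurally, not computationally: it lets one treat $D$ as a commutative polynomial ring over a field, where stably free modules are free (Quillen--Suslin). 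Hence $J$ is free of rank one, i.e.\ principal, and Theorem~\ref{first} finishes. So the missing idea in your plan is exactly this projective-implies-free step; without something of that strength you have no handle on the minimal degree of a generator of $I_0$.
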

\begin{proof}
Consider $\Omega_{\G/\F}$, the module of differentials (see, for example,
\cite{KLMP}, p.38), and let $J=[\Sigma]$ be the ideal of the ring $D=\G[\Delta]$ of differential
operators. We have the exact sequence of $D$-modules:
$$
0\to _{D}J\to D \to\Omega_{\G/\F}\to 0.
$$

Suppose $\omega_{\G/\F}(s)=\binom{s+m}{m}-\binom{s+m-d}{m}$.
By Theorem 5.7.8 (\cite{KLMP}), there exists
this exact sequence:
$$
0\to _{D}D\to D \to\Omega_{\G/\F}\to 0.
$$

It follows from these representations, that $_{D}J\oplus D=D\oplus D$.
Hence the ideal $J$ is a projective $D$-module.
Under the conditions of the theorem, we can assume that the ring $D$ 
is a ring of commutative polynomials over a field, and then
$J$ must be free (principal ideal in a polynomial ring).
Then, by the  theorem (\ref{first}), holds
$\omega_{\xi/\F}=\binom{s+m}{m}-\binom{s+m-d}{m}$.
This is contrary to the condition.
\end{proof}
As follows from the proof, the example (\ref{ex2}) represents the projective ideal
in non-commutative ring $D=\C(x_1,x_2)[\partial_1,\partial_2]$, which is not free.

\end{document}